\newtheorem{theorem}{Theorem}[section]
\newtheorem{lemma}[theorem]{Lemma}
\newtheorem{proposition}[theorem]{Proposition}
\theoremstyle{definition}
\newtheorem{example}[theorem]{Example}
\newtheorem{remark}[theorem]{Remark}
\numberwithin{equation}{section}
\begin{document}
	\thispagestyle{plain}
	
	\vspace*{9mm}
	
	\begin{center}
		
		{\Large \bf 
			Mean Ergodicity of Multiplication Operators on the Bloch and Besov Spaces}

		\bigskip

		{\bf  F.  Falahat}\vspace*{-2mm}\\
		\vspace{2mm} {\small Department of Mathematics, Faculty of Sciences, Shiraz Branch, Islamic Azad University,  Shiraz, Iran} \vspace{2mm}
		
		{\bf Z. Kamali$^*$\let\thefootnote\relax\footnote{$^*$Corresponding Author}}\vspace*{-2mm}\\
		\vspace{2mm} {\small   Department of Mathematics, Faculty of Sciences, Shiraz Branch, Islamic Azad University,  Shiraz, Iran} \vspace{2mm}
		
	\end{center}
	
	\vspace{4mm}
\begin{abstract}
In this paper, the power boundedness and mean ergodicity of multiplication operators are investigated on the Bloch space $\mathcal{B}$, the little Bloch space $\mathcal{B }_0 $ and the Besov Space $\mathcal{B}_p$. We completely characterize power bounded, mean ergodic and uniformly mean ergodic multiplication operators on $\mathcal{B}$ and $\mathcal{B }_0 $.
\end{abstract}
\noindent \textbf{Keywords}: Multiplication Operator, Power bounded, Mean Ergodic Operator,  Bloch Spaces, Besov Spaces.

\noindent \textbf{2010 Mathematics Subject Classification}: 47B38, 46E15, 47A35.
\baselineskip=.8cm
\section{Introduction}
Let $\mathbb{U}=\{z \in \mathbb{C} : |z|<1\}$ be the unit disk in the complex plane $\mathbb{C}$ and $H(\mathbb{U})$ be the space of all holomorphic functions on $\mathbb{U}$. The Bloch space $\mathcal{B}$ is defined to be the space  of all functions in $H(\mathbb{U})$  such that
$$\beta_{f}=\sup_{z\in \mathbb{U}} (1-|z|^2)|f'(z)|<\infty.$$
The little Bloch space $\mathcal{B}_{0}$ is the closed subspace of $\mathcal{B}$ consisting of all functions $f\in \mathcal{B}$ with
$$\lim_{|z|\rightarrow 1}(1-|z|^{2})|f'(z)|=0.$$
It is easy to check that the Bloch and little Bloch, $\mathcal{B}$ and $\mathcal{B}_{0}$ are  Banach spaces under the norm 
$$||f||_{\mathcal{B}}=|f(0)|+\beta_{f}.$$\\
It is well known that $\mathcal{B}_{0}^{*}=A^{1}(\mathbb{U})$ and $(A^{1}(\mathbb{U}))^{*}=\mathcal{B}$ under the complex integral pairing $<f,g>=\int_{\mathbb{U}}f(z)\overline{g(z)}dA(z)$, where $dA(z)$ is lebesgue area measure on $\mathbb{U}$ and $A^{1}(\mathbb{U})$ is the space of all analytic functions $f$ on $\mathbb{U}$ such that $||f||=\int_{\mathbb{U}}|f(z)|dA(z)<\infty$, see \cite{zu2}.\\
Another space we dealt with in this paper is the  Besov  space  $\mathcal{B}_p$ $(1<p<\infty)$ which is defined to be the space of holomorphic functions $f$ on   $\mathbb{U}$ such that 
\begin{align*}
\gamma_f^p&=\int_{\mathbb{U}}|f'(z)|^p(1-|z|^2)^{p-2}dA(z)\\
&=\int_{\mathbb{U}}|f'(z)|^p(1-|z|^2)^pd\lambda(z)<\infty,
\end{align*}
where $d\lambda(z)$ is the M$\ddot{o}$bius invariant measure on U, with definition
$$d\lambda(z) = \frac{dA(z)}{(1-|z|^2)^2}.$$
For $p=1$ , the Besov space $\mathcal{B}_1$ consists of all holomorphic functions $f$ on $\mathbb{U}$  whose second derivatives are integrable,
$$\mathcal{B}_1=\{f\in H(\mathbb{U}) : ||f||_{\mathcal{B}_1}=\int_{\mathbb{U}}|f''(z)|dA(z)<\infty\}.$$
For $1<p<\infty$, $||f||_{p}=|f(0)| + \gamma_f$  is a norm on ${\mathcal{B}_p}$ which makes it a Banach space. $\mathcal{B}_p$ is reflexive space (while $\mathcal{B}_1$ is not) and polynomials are dense in it. Furthermore, for each $1<q<p<\infty$, $\mathcal{B}_1 \subset \mathcal{B}_q\subset \mathcal{B}_p \subset \mathcal{B}$ and $\mathcal{B}_1$ is a subset of the little Bloch space $\mathcal{B}_0$ (see  \cite{zu1}). Also remember that the Besov space $\mathcal{B}_2$  is  known as the classical $Dirichlet$ Space $\mathcal{D}$ and $\mathcal{B}_\infty$ is the Bloch space $\mathcal{B}$.
Moreover, the following two useful lemmas determine that norm convergence implies pointwise convergence in the Bloch and Besov spaces  $\mathcal{B}_p (1<p<\infty)$. We state them here without proof.
\begin{lemma}
For all $f\in\mathcal{B}$ and for each $z\in\mathbb{U}$, we have
$$|f(z)|\le||f||_\mathcal{B}\log\frac{2}{1-|z|^2}.$$
\begin{proof}
See \cite{zu2}.
\end{proof}
\end{lemma}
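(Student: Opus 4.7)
The plan is to reduce the pointwise bound to an integral bound by writing $f(z)-f(0)$ as a contour integral of $f'$ along the radial segment from the origin to $z$, and then to feed the Bloch estimate $(1-|w|^{2})|f'(w)|\le\beta_{f}$ into that integral.

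First I would fix $z\in\mathbb{U}$, write $z=|z|e^{i\theta}$, and parametrize the segment from $0$ to $z$ by $\gamma(t)=te^{i\theta}$, $t\in[0,|z|]$. Since $f\in H(\mathbb{U})$, the fundamental theorem of calculus along this smooth path yields
\[
f(z)-f(0)=\int_{0}^{|z|}f'(te^{i\theta})\,e^{i\theta}\,dt,
\]
so that $|f(z)|\le|f(0)|+\int_{0}^{|z|}|f'(te^{i\theta})|\,dt$.

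Next I would apply the defining inequality of $\mathcal{B}$: for every $w\in\mathbb{U}$, $|f'(w)|\le\beta_{f}/(1-|w|^{2})$. On the segment this gives $|f'(te^{i\theta})|\le\beta_{f}/(1-t^{2})$, and a direct computation of the elementary integral produces
\[
\int_{0}^{|z|}\frac{dt}{1-t^{2}}=\tfrac{1}{2}\log\frac{1+|z|}{1-|z|}.
\]
Using $|f(0)|\le\|f\|_{\mathcal{B}}$ and $\beta_{f}\le\|f\|_{\mathcal{B}}$, this combines to
\[
|f(z)|\le\|f\|_{\mathcal{B}}\left(1+\tfrac{1}{2}\log\frac{1+|z|}{1-|z|}\right).
\]

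The final step, and the only subtle one, is to recast the right-hand side in the form $\|f\|_{\mathcal{B}}\log\frac{2}{1-|z|^{2}}$. This is a purely real-variable comparison: write $\log\frac{2}{1-|z|^{2}}=\log 2-\log(1-|z|)-\log(1+|z|)$ and $\tfrac{1}{2}\log\frac{1+|z|}{1-|z|}=\tfrac{1}{2}\log(1+|z|)-\tfrac{1}{2}\log(1-|z|)$, and verify that their difference is nonnegative by calculus in $r=|z|\in[0,1)$ (the derivative vanishes at a single interior point, which one checks is a minimum, and the value there is positive). I expect this last comparison of two logarithmic expressions to be the main technical obstacle, since the first two steps are just the standard ``integrate along the radial segment'' trick; everything else is routine once the integral $\int_{0}^{|z|}dt/(1-t^{2})$ is evaluated.
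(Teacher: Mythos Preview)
Your radial-integration argument up to the bound
\[
|f(z)|\le |f(0)|+\beta_f\cdot\tfrac{1}{2}\log\tfrac{1+|z|}{1-|z|}\le\|f\|_{\mathcal{B}}\Bigl(1+\tfrac{1}{2}\log\tfrac{1+|z|}{1-|z|}\Bigr)
\]
is correct and is exactly the standard proof (this is the estimate in Zhu's book, to which the paper simply defers). The problem is the last step: the real-variable inequality
\[
1+\tfrac{1}{2}\log\tfrac{1+r}{1-r}\le\log\tfrac{2}{1-r^{2}},\qquad r\in[0,1),
\]
is \emph{false}. At $r=0$ the left side equals $1$ and the right side equals $\log 2<1$; carrying out your own sketch, the difference has its unique critical point at $r=1/2$, and it is indeed a minimum, but the value there is $\tfrac{3}{2}\log\tfrac{4}{3}-1\approx-0.57$, not positive. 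In fact the lemma as printed (with no absolute constant in front) already fails for the constant function $f\equiv 1$ at $z=0$, since $1\not\le\log 2$.

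The correct version in Zhu's book carries an absolute constant $C$ (equivalently, one may replace $\log\frac{2}{1-|z|^{2}}$ by $\log\frac{e}{1-|z|^{2}}$, which is $\ge 1$ on $\mathbb{U}$). For every use of the lemma in the paper --- the pointwise-convergence arguments in Proposition~2.1 and the estimate inside the proof of Theorem~2.3 --- any bound of the form $|f(z)|\le C\|f\|_{\mathcal{B}}\log\frac{2}{1-|z|^{2}}$ suffices, and your intermediate inequality already delivers that. So your approach is the right one; only the final cosmetic comparison is wrong, and it is unnecessary anyway.
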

\begin{lemma}
For each $f\in\mathcal{B}_p$ $(1<p<\infty)$ and for every $z\in\mathbb{U}$, there is $C\ge0$ (depends only on p) such that 
$$|f(z)|\le C||f||_{p}(\log\frac{2}{1-|z|^2})^{1-1/p}.$$
\begin{proof}
See \cite{zu1}.
\end{proof}
\end{lemma}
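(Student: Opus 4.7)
My plan is to adapt the classical radial-integration argument that yields the $(\log\frac{2}{1-|z|^2})^{1/2}$-bound for the Dirichlet case $p=2$, extending it to general $p\in(1,\infty)$ via Hölder's inequality. First I would reduce matters to estimating $|f(z)-f(0)|$, since $|f(0)|\le \|f\|_{p}$ and $\log\frac{2}{1-|z|^2}\ge \log 2$. Writing $z=re^{i\theta}$ and integrating $f'$ along the radial segment from $0$ to $z$ gives
$$|f(z)-f(0)|\le \int_0^r |f'(\rho e^{i\theta})|\,d\rho.$$

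Next I would apply Hölder's inequality with conjugate exponents $p$ and $q=p/(p-1)$, splitting the integrand using the weight $(1-\rho)^{(p-1)/p}$ so that the $L^p$ factor carries the Besov weight $(1-\rho)^{p-1}$ while the $L^q$ factor produces a logarithmic integral:
$$\int_0^r |f'(\rho e^{i\theta})|\,d\rho\le \left(\int_0^r |f'(\rho e^{i\theta})|^p(1-\rho)^{p-1}\,d\rho\right)^{1/p}\left(\int_0^r\frac{d\rho}{1-\rho}\right)^{1-1/p}.$$
The second factor is at most $\bigl(\log\frac{2}{1-r^2}\bigr)^{1-1/p}$, delivering the exact logarithmic exponent claimed in the lemma.

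The remaining, and principal, task is to bound the first factor by $C\gamma_f$. The difficulty is that the Besov seminorm is a two-dimensional area integral, while the factor to control is a one-dimensional slice along a single ray. To bridge this gap I would invoke the subharmonicity of $|f'|^p$ on the discs $D(\rho e^{i\theta},(1-\rho)/2)$, on which $(1-|w|^2)$ is comparable to $(1-\rho)$. The mean value estimate yields
$$|f'(\rho e^{i\theta})|^p(1-\rho)^{p-1}\le \frac{C}{1-\rho}\int_{D(\rho e^{i\theta},(1-\rho)/2)}|f'(w)|^p(1-|w|^2)^{p-2}\,dA(w),$$
and after integrating in $\rho$ and swapping the order of integration via Fubini, the question reduces to verifying that for each $w\in\mathbb{U}$ the set of admissible $\rho\in[0,r]$ has length $O(1-|w|)$, so that the inner $d\rho/(1-\rho)$-integral is bounded, and that only $w$ lying in a thin cone about the ray $\arg z$ contribute. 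This bounded-overlap/Fubini step is where I expect the main technical work; once it is in hand, combining the estimates and absorbing $|f(0)|$ into the final constant completes the proof.
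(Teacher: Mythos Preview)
The paper does not supply its own argument here; it merely records the estimate and refers the reader to Zhu \cite{zu1}. Your sketch, by contrast, is a correct self-contained proof, and the strategy---radial integration, H\"{o}lder with the weight $(1-\rho)^{(p-1)/p}$, then the sub-mean-value inequality for $|f'|^p$ to pass from the one-dimensional slice to the two-dimensional Besov integral---is the standard route. The Fubini step you flag as the main technical point goes through cleanly: if $w\in D(\rho e^{i\theta},(1-\rho)/2)$ then $\big|\,|w|-\rho\,\big|<(1-\rho)/2$, which forces $\rho\in\big(2|w|-1,\ (2|w|+1)/3\big)$; on that interval $1-\rho\ge \tfrac{2}{3}(1-|w|)$, so $\displaystyle\int \frac{d\rho}{1-\rho}\le\log 3$ uniformly in $w$. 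The cone observation is true but unnecessary---after Fubini the integrand is already dominated by $|f'(w)|^p(1-|w|^2)^{p-2}$ over all of $\mathbb{U}$, whose integral is $\gamma_f^p$. With this in hand your argument closes with a constant depending only on $p$, as required.
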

Bloch and Besov spaces and their properties specially from an operator and geometric view were studied extensively in previous years  in \cite{Arz2, zu1, zu2} and more recently in \cite{Gir, Gal}. 

If $\psi$ is a holomorphic function on  $\mathbb{U}$, the $multiplication \ operator$  $M_{ \psi}$  on $H(\mathbb{U})$ is defined by
$$M_{\psi}(f)=\psi f.$$
We recall that for a set  $\Omega$, $H^\infty(\Omega)=\{f\in H(\Omega):||f||_{\infty}=\sup_{z\in\Omega}|f(z)|<\infty\}$.
\begin{proposition}
	Let $X$ be a functional Banach space on the set  $\Omega$ and suppose $\psi$ is a complex-valued function on $\Omega$ such that  $\psi X \subset X$. Then the operator $M_\psi$  is a bounded operator on $X$ and $|\psi(x)| \le ||M_\psi||$  for all $x\in \Omega$. In particular, $\psi \in H^\infty (\Omega)$.
\end{proposition}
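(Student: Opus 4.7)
The plan is to extract boundedness from the closed graph theorem and then derive the pointwise estimate by dualizing against point evaluations.

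First I would verify that $M_\psi$ is a well-defined linear map $X \to X$, which follows immediately from the hypothesis $\psi X \subset X$ and pointwise linearity of multiplication. To apply the closed graph theorem, suppose $f_n \to f$ in $X$ and $M_\psi f_n \to g$ in $X$. Because $X$ is a functional Banach space, every point evaluation $e_x : X \to \mathbb{C}$, $e_x(h) = h(x)$, is a continuous linear functional. Hence $f_n(x) \to f(x)$ and $\psi(x) f_n(x) = (M_\psi f_n)(x) \to g(x)$ for each $x \in \Omega$, forcing $g(x) = \psi(x) f(x)$ on all of $\Omega$, i.e.\ $g = M_\psi f$. Thus the graph of $M_\psi$ is closed and boundedness follows.

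For the pointwise estimate, fix $x \in \Omega$ and observe that, as linear functionals on $X$,
$$e_x \circ M_\psi = \psi(x)\, e_x.$$
Taking operator norms on both sides gives $|\psi(x)|\, \|e_x\| = \|e_x \circ M_\psi\| \le \|e_x\|\, \|M_\psi\|$. Whenever $\|e_x\| \ne 0$ (which holds at every $x$ by the nondegeneracy built into the definition of a functional Banach space, since otherwise every function in $X$ would vanish at $x$), we may cancel to obtain $|\psi(x)| \le \|M_\psi\|$. For the remaining assertion $\psi \in H^\infty(\Omega)$, note that in our setting $X \subset H(\Omega)$ contains the constant function $1$, so $\psi = M_\psi(1) \in X \subset H(\Omega)$ is holomorphic; combined with the uniform bound just obtained, this places $\psi$ in $H^\infty(\Omega)$.

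The routine part is the closed graph verification; the only genuine subtlety is the dualization step and the implicit use of $\|e_x\| > 0$, which is why the statement is formulated inside the class of functional Banach spaces rather than for arbitrary normed spaces of functions. No serious obstacle is expected, since once point evaluations are continuous, every ingredient reduces to a one-line manipulation.
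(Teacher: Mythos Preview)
Your argument is correct and is the standard proof of this classical fact. The paper does not actually supply a proof of its own: it simply cites Krengel \cite{kre} for the result, so there is no in-paper argument to compare against. Your closed graph verification and the eigenvalue identity $e_x\circ M_\psi=\psi(x)\,e_x$ together with $\|e_x\|>0$ are exactly the ingredients one finds in the standard references; the only minor caveat is that the concluding step $\psi=M_\psi(1)\in X\subset H(\Omega)$ tacitly assumes that the constant function $1$ lies in $X$ and that $X$ consists of holomorphic functions, both of which are implicit in the paper's setting (Bloch and Besov spaces) but are not literally part of the abstract hypothesis ``functional Banach space''.
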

\begin{proof}
	See \cite{kre}.
\end{proof}
According to Lemma 1.1. and Lema 1.2. all three spaces, $\mathcal{B}$, $\mathcal{B }_0 $ and $\mathcal{B}_p$ are functional Banach spaces and by above proposition in all of them we have the following inequality:
\begin{align}
||\psi||_\infty \le ||M_\psi||.\
\end{align}
Investigating the boundedness or compactness and other properties of multiplication operators on the Bloch and Besov spaces have been done by many authors (see \cite{Arz2, Aln, zor}). Arazy in \cite{Arz2} proved that the multiplication operator $M_\psi$  is bounded on the Bloch space if and only if $\psi\in H^\infty(\mathbb{U})$ and $ \sigma_{\psi}<\infty$ where 
$$\sigma_{\psi}:=\sup_{z\in\mathbb{U}}\frac{1}{2}(1-|z|^2)|\psi'(z)|\log{\frac{1+|z|}{1-|z|}}.$$
Also Brown and Shields in \cite{brown} proved that $M_\psi$ is bounded on Bloch space if and only if it is bounded on little Bloch space.
In \cite{Aln}, Allen and Colonna  showed that if  $\psi\in H(\mathbb{U})$  induces a bounded multiplication operator $M_\psi$ on the Bloch space, then
\begin{align}
\max\{||\psi||_\mathcal{B},||\psi||_\infty\}\le ||M_\psi||\le\max\{||\psi||_\mathcal{B},||\psi||_\infty+\sigma_{\psi}\}.\
\end{align}

In the case of Besov Space, the situation is somewhat different and the issue is not as straightforward as the Bloch Space. A function $\psi\in H(\mathbb{U})$ is said to be $multiplier$ of $\mathcal{B}_p$ if $M_\psi(\mathcal{B}_p)\subseteq \mathcal{B}_p$. If the space of multipliers on $\mathcal{B}_p$  in to itself represented by $M(\mathcal{B}_p)$, then by Closed Graph theorem  $\psi\in M(\mathcal{B}_p)$ if and only if $M_\psi$ is a bounded operator on $\mathcal{B}_p$.
Stegenga \cite{ste}, Characterized  multipliers of the Dirichlet space $\mathcal{D}$ in to itself. Characterization of multipliers of the Besov space $\mathcal{B}_p$ ($1 < p <\infty$), based on capacities and Carleson measures type conditions was given by Wu \cite{wu} and Arcozzi \cite{Arc}. Zorboska in \cite{zor}, Corollary 3.2, proved that for $1<p<\infty$ if $\psi\in M(\mathcal{B}_p)$, then $\psi\in H^\infty(\mathbb{U})$ and
 $$\sup_{z\in\mathbb{U}}(1-|z|^2)|\psi'(z)|(\log{\frac{2}{1-|z|^2}})^{1-1/p}<\infty .$$
Following proposition is another applied result of Zorboska about multiplication operators on the Besov spaces.
\begin{proposition}
Suppose that $1<p<\infty$ and $\psi\in H^\infty (\mathbb{U})$.
 \begin{itemize}
\item[(i)] If $\psi\in M(\mathcal{B}_p)$ and  $0 < r < 1$, then
$$\sup_{\omega\in D}\int_{D(\omega,r)}(1-|z|^2)^{p-2}|\psi'(z)|^p(\log{\frac{2}{1-|z|^2}})^{p-1}dA(z)<\infty,$$ where $D(\omega,r)= \{z\in \mathbb{U}: \beta(z,\omega)<r\}$ is the hyperbolic disk with radius r,  $\beta(z,\omega)= \log\frac{1+|\psi_{z}(\omega)|}{1-|\psi_{z}(\omega)|} $ and $\psi_{z}(\omega)=\frac{z-\omega}{1-\bar{z}\omega}$ for all $z , \omega\in\mathbb{U}.$
\item[(ii)] If $\int_{\mathbb{U}}(1-|z|^2)^{p-2}|\psi'(z)|^p(\log{\frac{2}{1-|z|^2}})^{p-1}dA(z)<\infty$, then $\psi\in M(\mathcal{B}_p)$.
\end{itemize}
\begin{proof}
See \cite{zor}.
\end{proof}
\end{proposition}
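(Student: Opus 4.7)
The approach is to estimate $\|\psi f\|_p$ for $f \in \mathcal{B}_p$ directly. Writing $(\psi f)' = \psi' f + \psi f'$ and applying Minkowski's inequality in $L^p((1-|z|^2)^{p-2}\,dA)$, the problem splits into two pieces. Since $\psi \in H^\infty(\mathbb{U})$ by hypothesis, the term $\psi f'$ contributes at most $\|\psi\|_\infty\,\gamma_f \le \|\psi\|_\infty\,\|f\|_p$. For the term $\psi' f$, Lemma 1.2 gives $|f(z)|^p \le C^p \|f\|_p^p (\log\frac{2}{1-|z|^2})^{p-1}$; integrating against $|\psi'(z)|^p(1-|z|^2)^{p-2}\,dA(z)$ yields exactly the integral assumed finite in the hypothesis. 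Combining, $\|\psi f\|_p \le C'\|f\|_p$, and so $\psi \in M(\mathcal{B}_p)$.

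\textbf{Plan for (I).} The idea is to test the bounded operator $M_\psi$ against a family $\{f_\omega\}_{\omega\in\mathbb{U}}$ of holomorphic functions concentrated near $\omega$. A natural candidate is
$$f_\omega(z) = \left(\log\frac{2}{1-\bar\omega z}\right)^{1-1/p},$$
which is holomorphic on $\mathbb{U}$ (the argument of the logarithm lies in the right half-plane). The plan has two key verifications: first, that $\sup_{\omega\in\mathbb{U}}\|f_\omega\|_p < \infty$; second, that $|f_\omega(z)|^p \gtrsim (\log\frac{2}{1-|z|^2})^{p-1}$ on the hyperbolic disk $D(\omega,r)$, which follows from the standard comparabilities $|1-\bar\omega z| \asymp 1-|\omega|^2 \asymp 1-|z|^2$ inside such disks. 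Given these, $\|\psi f_\omega\|_p \le \|M_\psi\|\,\|f_\omega\|_p \le C$, and decomposing $(\psi f_\omega)' = \psi' f_\omega + \psi f_\omega'$ together with the bound $\|\psi f_\omega'\|_{L^p((1-|z|^2)^{p-2}dA)} \le \|\psi\|_\infty\,\|f_\omega\|_p$ yields a uniform bound on $\|\psi' f_\omega\|_{L^p((1-|z|^2)^{p-2}dA)}$. Restricting this integral to $D(\omega,r)$ and inserting the pointwise lower bound on $|f_\omega|^p$ delivers the Carleson-type estimate.

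\textbf{Main obstacle.} The technical heart of the argument is verifying $\sup_\omega\|f_\omega\|_p < \infty$. A direct differentiation gives
$$\|f_\omega\|_p^p \asymp \int_{\mathbb{U}}\left|\log\frac{2}{1-\bar\omega z}\right|^{-1}\frac{|\omega|^p}{|1-\bar\omega z|^p}(1-|z|^2)^{p-2}\,dA(z),$$
where the logarithmic weight is precisely what tames the singularity of the Cauchy-type kernel $|1-\bar\omega z|^{-p}$. To handle this integral uniformly in $\omega$ I would split $\mathbb{U}$ hyperbolically around $\omega$: on $D(\omega,1)$ the integrand is controlled once one substitutes $|1-\bar\omega z| \asymp 1-|\omega|^2$ and uses that the hyperbolic disk has Euclidean area $\asymp (1-|\omega|^2)^2$; off $D(\omega,1)$ one is left with a standard weighted Cauchy-kernel estimate of Forelli--Rudin type, tempered by the logarithmic factor. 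This computation is routine in Besov-space theory but requires careful bookkeeping; should the natural candidate above not yield a uniform $\mathcal{B}_p$-bound across the full range $1<p<\infty$, one can substitute a finite-difference or Littlewood--Paley variant in the same spirit, as is customary in multiplier characterizations of this kind.
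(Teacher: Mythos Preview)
The paper does not prove this proposition at all: its entire proof reads ``See \cite{zor}.'' So there is no in-paper argument to compare your sketch against; the result is imported wholesale from Zorboska.

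That said, your outline is sound and is essentially the natural argument one would expect in \cite{zor}. Part (II) is exactly right: the decomposition $(\psi f)'=\psi'f+\psi f'$, the $H^\infty$ bound on the second term, and Lemma~1.2 on the first term combine to give $\|\psi f\|_p\lesssim\|f\|_p$ with constant depending on the hypothesised integral. For Part (I), the test-function strategy with $f_\omega(z)=\bigl(\log\tfrac{2}{1-\bar\omega z}\bigr)^{1-1/p}$ is the standard mechanism, and you have correctly isolated the only nontrivial step, namely $\sup_\omega\|f_\omega\|_p<\infty$. Your heuristic for this is also correct: the Forelli--Rudin estimate puts the kernel $|1-\bar\omega z|^{-p}(1-|z|^2)^{p-2}$ at the borderline exponent, producing a $\log\tfrac{1}{1-|\omega|^2}$ divergence that is exactly cancelled by the extra factor $\bigl|\log\tfrac{2}{1-\bar\omega z}\bigr|^{-1}$ coming from $|f_\omega'|^p$. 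The hyperbolic-disk localisation and the comparabilities $|1-\bar\omega z|\asymp 1-|z|^2$ on $D(\omega,r)$ are standard and give the lower bound on $|f_\omega|^p$ you need. There is no genuine gap here; what remains is bookkeeping rather than a missing idea.
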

Notice that Galanopoulos in \cite{Gal}, Theorem(1.3) has shown  the existence of $\psi\in M(\mathcal{B}_p)$  such that $\int_{\mathbb{U}}(1-|z|^2)^{p-2}|\psi'(z)|^p(\log\frac{2}{1-|z|^2})^{p-1}dA(z)=\infty,$ i.e. the reverse of (ii) in preceding proposition is not correct. 

Let  $L(X)$ be the space of all linear bounded operators from locally convex Hausdorff  space $X$ into itself and $T \in L(X)$, the  Ces$\acute{a}$ro means of $T$ is defined by
$$T_{[n]}:=\frac{1}{n}\sum_{m=1}^nT^m, \ n \in \mathbb{N}.$$

An operator $T$ is (uniformly) mean ergodic if $\{{T_{[n]}}\}_{n=0}^{\infty}$ is a  convergent sequence in (norm) strong topology and is called $power\ bounded $ if the sequence $\{{T^n}\}_{n=0}^{\infty}$ is bounded in $L(X)$.\\ 
It is easy to check that for all $n\in\mathbb{N}$, $\frac{1}{n}T^{n}=T_{[n]}-\frac{n-1}{n}T_{[n-1]}$, where $T_{[0]}=I$ is the identity operator. From this we get if $T$ is mean ergodic, then for all $x\in X$, $\lim_{n\rightarrow \infty}\frac{1}{n}T^{n}x=0$ and in the uniform mean ergodic case, $\lim_{n\rightarrow \infty}\frac{1}{n} ||T^{n}||=0$.
The study of mean ergodicity of linear operators on Banach spaces goes back to 1931, when Von Numann proved that for a unitary operator $T$ on a Hilbert space $H$, there is a projection $P$ on $H$, such that $T_{[n]}$ converges to $P$ in the strong operator topology. 	In 1939 Lorch  demonstrated that for reflexive Banach spaces, power bounded operators are mean ergodic. Dunford in 1943 stated the connection between the spectral properties of an operator and its uniform mean ergodicity.
There are a lot of references about dynamical properties of different linear bounded operators on Banach, Fr$\acute{e}$chet and locally convex spaces. One of the best, is a book written by Bayart and Matheron \cite{Bay}. Additionally, \cite{Alb1, Alb2, kre} and the references therein give more details about mean ergodic and power bounded operators on locally convex spaces.
Bonet and Ricker \cite{Bon2}, characterized the mean ergodicity of multiplication operators in weighted spaces of holomorphic functions and recently Bonet, Jord$\acute{a}$ and Rodríguez \cite{Bon1} extended the results to the weighted space of continuous functions.
In this paper, we look for conditions  under which the multiplication operator $M_{ \psi}$ is power bounded and its  Ces$\acute{a}$ro means   is convergent  or uniformly convergent on the Bloch space  $\mathcal{B}$,  little Bloch space  $\mathcal{B}_0$ and the Besov Space $\mathcal{B}_p$.
\section{The  Bloch space  $\mathcal{B}$ and The  little Bloch space  $\mathcal{B}_0$}
The following Proposition provides  the necessary condition for multiplication operators to be power bounded, uniform mean ergodic and mean ergodic on $\mathcal{B}$ or $\mathcal{B}_0$.  
\begin{proposition}
	Suppose $\psi\in H(\mathbb{U})$ and $M_\psi$ is a bounded operator on $\mathcal{B}$($\mathcal{B}_0$). If $M_\psi$ is power bounded, mean ergodic or uniformly mean ergodic on $\mathcal{B}$($\mathcal{B}_0$), then $||\psi||_\infty\le1$. 
	\begin{proof}
		Suppose $M_\psi$ is uniformly mean ergodic or mean ergodic on $\mathcal{B}(\mathcal{B}_0$), then for all $f\in\mathcal{B}(\mathcal{B}_0$), we have $\lim_{n\rightarrow \infty}\frac{1}{n}f.\psi^n=0$ when $n\rightarrow \infty$. Let $f\equiv 1$, so $||\frac{\psi^n}{n}||_\mathcal{B} \rightarrow 0.$ By lemma (1.1), for all $z\in \mathbb{U}$, $|\frac{\psi^n(z)}{n}|\le||\frac{\psi^n}{n}||_\mathcal{B}\log\frac{2}{1-|z|^2},$ So for all $z\in \mathbb{U}$, $|\frac{\psi^n(z)}{n}|\rightarrow 0$ as $n\rightarrow \infty,$  it forces $|\psi(z)|\le1$ for all $z\in \mathbb{U}$ or equivalently, $||\psi||_\infty\le1$.

		Now suppose $||\psi||_{\infty} >1$. Choose $\lambda\in \mathbb{R}$ such that $||\psi||_{\infty} >\lambda > 1$. By (1.1), in both $\mathcal{B}$ and $\mathcal{B}_0$ we have  $||M_{\psi^n}|| \ge ||\psi^n||_{\infty}=||\psi||^n_{\infty} > \lambda^n$. $\lim_{n\rightarrow \infty}\lambda^n=\infty$, therefore $\{M_{\psi^n}\}_n$ is an unbounded sequence and consequently  $M_{\psi}$  is not power bounded operator on $\mathcal{B}$($\mathcal{B}_0)$.
	\end{proof}
\end{proposition}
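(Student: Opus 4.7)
The plan is to reduce all three hypotheses to the pointwise estimate that either $|\psi(z)|^n/n \to 0$ (for the ergodic cases) or the sequence $|\psi(z)|^n$ stays bounded (for power boundedness), each of which forces $|\psi(z)|\le 1$ on $\mathbb{U}$.

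The starting observation is that $M_\psi^n = M_{\psi^n}$, so by inequality (1.1) we have $\|\psi\|_\infty^n = \|\psi^n\|_\infty \le \|M_{\psi^n}\| = \|M_\psi^n\|$ in either space. In the power bounded case, $\sup_n\|M_\psi^n\|<\infty$ directly forces $\|\psi\|_\infty \le 1$, finishing that case in one line.

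For (uniform) mean ergodicity, I would use the telescoping identity coming from the definition of the Cesàro means: since $(n+1)(M_\psi)_{[n+1]} - n(M_\psi)_{[n]} = M_\psi^{n+1}$, one gets
\begin{equation*}
\frac{M_\psi^{n+1}}{n+1} = (M_\psi)_{[n+1]} - \frac{n}{n+1}\,(M_\psi)_{[n]},
\end{equation*}
so strong (respectively norm) convergence of $(M_\psi)_{[n]}$ implies $\frac{1}{n}M_\psi^n \to 0$ in the strong operator topology. Applied to the constant function $f\equiv 1$, which lies in $\mathcal{B}_0 \subseteq \mathcal{B}$, this yields $\bigl\|\psi^n/n\bigr\|_{\mathcal{B}} \to 0$.

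To conclude, I would invoke Lemma 1.1 on $\psi^n/n$, giving
\begin{equation*}
\frac{|\psi(z)|^n}{n} \;\le\; \Bigl\|\frac{\psi^n}{n}\Bigr\|_{\mathcal{B}}\,\log\frac{2}{1-|z|^2}\;\longrightarrow\;0
\end{equation*}
for each fixed $z \in \mathbb{U}$, whence $|\psi(z)|\le 1$ and therefore $\|\psi\|_\infty\le 1$. There is no genuinely hard step here; the only point requiring care is the passage from mean ergodicity to $T^n/n\to 0$ strongly, which is the classical telescoping identity above and can be written in a single line.
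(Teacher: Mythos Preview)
Your proof is correct and follows essentially the same route as the paper: the power bounded case via the inequality $\|\psi\|_\infty^n \le \|M_\psi^n\|$, and the (uniformly) mean ergodic case via $\|\psi^n/n\|_{\mathcal{B}}\to 0$ applied to $f\equiv 1$ together with Lemma~1.1. The only cosmetic difference is that you spell out the telescoping identity $\frac{T^{n+1}}{n+1}=(T)_{[n+1]}-\frac{n}{n+1}(T)_{[n]}$ to justify $T^n/n\to 0$, whereas the paper simply asserts this standard consequence of mean ergodicity.
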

The following Lemma which is proved in \cite{brown} is very useful in proving the next theorem. We omit the proof. 
\begin{lemma}{(See \cite{brown})}
	If $\{f_{n}\}\subseteq \mathcal{B}_0$ then $f_{n}\rightarrow 0$ weakly if and only if $f_{n}(z)\rightarrow 0$ for all $z\in\mathbb{U}$ and $\sup_{n}||f_{n}||_{\mathcal{B}}<\infty$. 
\end{lemma}
\begin{remark}
Consider that for $\psi\in H^\infty(\mathbb{U})$ Schwarz-Pick Lemma \cite{Bay} implies that $\beta_\psi\leq ||\psi||_\infty$. In fact if $\psi\not\equiv 0$, then for $\varphi=\frac{\psi}{||\psi||}$ we have:
$$\frac{\beta_\psi}{||\psi||_{\infty}}=\beta_\varphi=\sup_{z\in\mathbb{U}}(1-|z|^2)|\varphi'(z)|\leq\sup_{z\in\mathbb{U}}(1-|\varphi(z)|^2)\leq 1.$$
\end{remark}
\begin{theorem}
Suppose  $\psi$ is a holomorphic  function on $\mathbb{U}$ inducing a bounded multiplication operator $M_{\psi}$  on $\mathcal{B}$($\mathcal{B}_0$), if $||\psi||_{\infty}\leq 1$ then $M_{\psi}$ is power bounded on $\mathcal{B}$($\mathcal{B}_0)$.
\begin{proof}
We recall that $\mathcal{B}_0^{**}=\mathcal{B}$ and $(M_{\psi}|_{\mathcal{B}_0})^{**}=M_{\psi}|_{\mathcal{B}}$. (By $M_{\psi}|_{\mathcal{B}_0}$ and $M_{\psi}|_{\mathcal{B}}$ we mean the multiplication on $\mathcal{B}_{0}$ and $\mathcal{B}$, respectively). So $||(M_{\psi}|_{\mathcal{B}_0})||=||M_{\psi}|_{\mathcal{B}}||$ and clearly power boundedness of $M_{\psi}$ on $ \mathcal{B}$ implies it on $\mathcal{B}_0$ and vice versa.
By (1.2), for all $n\in\mathbb{N}$, $||M_{\psi^n}|| \le max\{||\psi^n||_{\mathcal{B}},||\psi^n||_{\infty} +\sigma_{\psi^n}\}$. If for some $z\in\mathbb{U}$, $|\psi(z) |=1$, then by Maximum Modules Principal, $ \psi\equiv \xi$ where $|\xi|=1$, then $\sigma_{\psi^n}=0$ for all $n\in\mathbb{N}$. So for $n\in\mathbb{N}$, $||M_{\psi^n}||\le||\psi^n||_\mathcal{B}=|\psi^n(0)|+\beta_\psi^n\le|\psi^n(0)|+||\psi||_\infty^n\le2$ or $||M_{\psi^n}||\le||\psi||_\infty^n+\sigma_\psi^n\le1$, then $\sup_{n\in\mathbb{N}}||M_{\psi^n}||\le2$ and $M_\psi$  is power bounded.\\
Now, suppose for all $z\in\mathbb{U}$, $|\psi(z)|<1.$ Fix $f\in\mathcal{B}_0$ and let $L:\mathcal{B}_0\rightarrow\mathbb{C}$ be a bounded linear functional. There exists $g\in A^{1}(\mathbb{U})$ such that $L(M_{\psi^{n}}f)=\int_{\mathbb{U}}\psi^{n}(z )f(z)\overline{g(z)}dA(z)$. Clearly, for all $n\in\mathbb{N}$ the function $\psi^{n}(z)f(z)\overline{g(z)}$ is integrable and Since $|\psi^{n}(z)|<1$, it converges pointwise to zero and by Lebesgue Convergence Theorem, $L(M_{\psi^{n}}f)\rightarrow 0$ and by Lemma 2.2 and Uniform boundedness Principle the result follows.\\
\end{proof}
\begin{remark} In the case $||\psi||_{\infty}<1$, we can find a upper bound for the sequence $\{M_{\psi^{n}}\}$.  If for $n\in\mathbb{N}$, $||M_{\psi^n}||\le||\psi^n||_{\mathcal{B}}$, then
	\begin{align}
		||M_{\psi^n}||&\le||\psi^n||_{\mathcal{B}}=|\psi^n(0)|+\beta_{\psi^n}\notag\\
		&\le|\psi^n(0)|+||\psi^n||_{\infty}\notag\\
		&=|\psi^n(0)|+||\psi||^n_{\infty}\le2,
	\end{align}
	and if $n\in\mathbb{N}$,  $||M_{\psi^n}|| \le ||\psi^n||_{\infty} +\sigma_{\psi^n}$, then
	\begin{align}
		||M_{\psi^n}||&\le||\psi^n||_{\infty}+\frac{n}{2}\sup_{z\in\mathbb{U}}(1-|z|^2)|\psi'(z)||\psi(z)|^{n-1}\log\frac{1+|z|}{1-|z|}\notag\\
		&\le||\psi^n||_\infty+n||\psi||^{n-1}_{\infty}\left(\frac{1}{2}\sup_{z\in\mathbb{U}}(1-|z|^2)|\psi'(z)|\log\frac{1+|z|}{1-|z|}\right)\notag\\
		&=||\psi||^n_\infty+n||\psi||^{n-1}_{\infty}\sigma_{\psi}\notag\\
		&\le1+n||\psi||^{n-1}_{\infty}\sigma_\psi.
	\end{align}
	But since $||\psi||_{\infty}<1$, we have $\lim_{n\rightarrow\infty}n||\psi||^{n-1}_{\infty}\sigma_\psi=0$ and therefore $\{n||\psi||^{n-1}_{\infty}\}$ is a bounded sequence. Let $K=\sup_{n\in\mathbb{N}}n||\psi||^{n-1}_{\infty}\sigma_\psi$, then $||M_{\psi^n}||\le1+K.$  So by this and (2.1), $\sup_{n\in\mathbb{N}}||M_{\psi^n}|| \le max\{2,1+K\}$.
\end{remark}
\end{theorem}
\begin{theorem}
Suppose that  $M_{\psi}$  is a bounded operator on $\mathcal{B}$($\mathcal{B}_0$). If $||\psi||_{\infty} < 1$ or $\psi\equiv\xi$ where $\xi\in\partial\mathbb{U}$, then $M_{\psi}$ is uniformly mean ergodic (and hence mean ergodic) on $\mathcal{B}$($\mathcal{B}_0$).
\end{theorem}  
\begin{proof}
	Primarily suppose $\psi\equiv\xi$, $|\xi|=1$. If $\xi=1$, then for all $n \in \mathbb{N}$ and $f\in\mathcal{B}$, $(M_{\psi})_{[n]}f=f$ and clearly $M_{\psi}$ is uniformly mean ergodic on $\mathcal{B}$($\mathcal{B}_0$) and if $\xi\ne1$, then $M_{\psi^n}f=\psi^nf=\xi^nf$ and $(M_{\psi})_{[n]}f=\frac{\xi+\xi^2+\dots+\xi^n}{n}f=\frac{f}{n}\frac{\xi(1-\xi^{n+1})}{1-\xi}$.

In this case for $f\in \mathcal{B}$($\mathcal{B}_0)$ with $||f||_{\mathcal{B}}\le1$, we have
\begin{align*}
	||(M_{\psi})_{[n]}f||_{\mathcal{B}}&=|\frac{f(0)}{n}\frac{\xi(1-\xi^{n+1})}{1-\xi}|+\frac{1}{n}\sup_{z\in\mathbb{U}}(1-|z|^2)|f'(z)||\frac{1-\xi^{n+1}}{1-\xi}|\\
	&\le\frac{2}{n|1-\xi|}+||f||_{\mathcal{B}}\frac{2}{n|1-\xi|}\le\frac{4}{n|1-\xi|},
\end{align*}
so $||(M_{\psi})_{[n]}|| \rightarrow 0$ when $n\rightarrow \infty$ and $M_{\psi}$ is uniformly mean ergodic on $\mathcal{B}$($\mathcal{B}_0$).

Now, suppose $||\psi||_{\infty}<1$ and let $f\in\mathcal{B}$($\mathcal{B}_0$) such that $||f||_{\mathcal{B}}\le1$, then 
\begin{align}
||(M_{\psi})_{[n]}f||_{\mathcal{B}}&\le\frac{|f(0)|}{n}\sum_{m=1}^{n}|\psi(0)|^m+
\frac{1}{n}\sup_{z\in\mathbb{U}}(1-|z|^2)\left|f'(z)\psi(z)\frac{1-\psi^{n}(z)}{1-\psi(z)}\right|+\notag\\
&\frac{1}{n}\sup_{z\in\mathbb{U}}(1-|z|^2)\left|f(z)\left(\psi(z)\frac{1-\psi^{n}(z)}{1-\psi(z)}\right)'\right|.
\end{align}
We show that all three components of the right side are uniformly convergent to 0 and so the proof will be complete. Since $||\psi||_{\infty}<1$, Maximum Modules principle implies $|\psi(0)|<1$, so $\{\frac{|f(0)|}{n}\sum_{m=1}^{n}|\psi(0)|^m\}_n$ converges to zero, as  $n\rightarrow \infty.$ Similarly, $\frac{1}{n}\sup_{z\in\mathbb{U}}(1-|z|^2|)|f'(z)\psi(z)\frac{1-\psi^{n}(z)}{1-\psi(z)}| \le\frac{1}{n}\frac{2}{1-||\psi||_{\infty}},$
therefore $\{\frac{1}{n}\sup_{z\in\mathbb{U}}(1-|z|^2|)|f'(z)\psi(z)\frac{1-\psi^{n}(z)}{1-\psi(z)}|\}_n$ converges uniformly to 0 as $n\rightarrow \infty.$ Finally, since by lemma (1.1), $|f(z)|\le||f||_{\mathcal{B}}\log\frac{2}{1-|z|^2}$, we get: 
\begin{align*} &\sup_{z\in\mathbb{U}}(1-|z|^2)|f(z)|\left|\left(\psi(z)\frac{1-\psi^{n}(z)}{1-\psi(z)}\right)'\right|\\
\le&\sup_{z\in\mathbb{U}}(1-|z|^2)|f(z)|\left|\psi'(z)\frac{1-\psi^{n}(z)}{1-\psi(z)}\right|\\
+&\sup_{z\in\mathbb{U}}(1-|z|^2)|f(z)|\left|\psi(z)\left(\frac{1-\psi^{n}(z)}{1-\psi(z)}\right)'\right|\\
\le&\sup_{z\in\mathbb{U}}(1-|z|^2)||f||_{\mathcal{B}}\log\frac{2}{1-|z|^2}||\psi'(z)|\frac{2}{(1-||\psi||_{\infty})^2}\\	
	 +&\sup_{z\in\mathbb{U}}(1-|z|^2)||f||_{\mathcal{B}}\log\frac{2}{1-|z|^2}||\psi'(z)|\frac{1+(n-1)||\psi||_{\infty}^{n+1}+n||\psi||_{\infty}^{n}}{(1-||\psi||_{\infty})^2}\\
\le&||f||_{\mathcal{B}}\left(\sup_{z\in\mathbb{U}}(1-|z|^2)|\psi'(z)|\log\frac{2}{1-|z|^2}\right) \left(\frac{3+(n-1)||\psi||_{\infty}^{n+1}+n||\psi||_{\infty}^{n}}{(1-||\psi||_{\infty})^2}\right)
\end{align*}
but $M_{\psi}$ is bounded operator on $\mathcal{B}$, so by \cite{brown} $\sup_{z\in\mathbb{U}}(1-|z|^2)|\psi'(z)|\log\frac{2}{1-|z|^2}<\infty$,
then the last part of the inequality (2.3) tends also to the zero
, when $n\rightarrow \infty$ and there is nothing left to prove. 
\end{proof}
\begin{lemma}
	 Let $M_{\psi}$ be a bounded operator on $\mathcal{B}$ $(\mathcal{B}_0)$, then $I-M_{\psi}$ is an isomorphism of $\mathcal{B}$ $(\mathcal{B}_0)$ if and only if $\frac{1}{1-\psi}\in H^{\infty}(\mathbb{U})$. 
\end{lemma}
\begin{proof}
If $I-M_{\psi}$ is invertible, then clearly, $(I-M_{\psi})^{-1}=(M_{1-\psi})^{-1}=M_\frac{1}{1-\psi}$, so by \cite{Arz2} we must have $\frac{1}{1-\psi}\in H^{\infty}(\mathbb{U})$. Conversely, suppose $\frac{1}{1-\psi}\in H^{\infty}(\mathbb{U})$. $$\sigma_\frac{1}{1-\psi}=\sup_{z\in \mathbb{U}}\frac{1}{2}(1-|z|^{2})\frac{|{\psi}'(z)|}{|1-\psi(z)|^{2}} \log\frac{1+|z|}{1-|z|}\leq ||\frac{1}{1-\psi}||_{\infty}  \, \,\sigma_{\psi}<\infty,$$ so by \cite{Arz2}, $M_{\frac{1}{1-\psi}}$ is bounded on $\mathcal{B}$ $(\mathcal{B}_0)$, i. e. $I-M_{\psi}$ is invertible.
\end{proof} 
\begin{proposition}
	Let $\psi$ be a non constant analytic function on $\mathbb{U}$ with $||\psi||_\infty =1$. If $M_{\psi}$ is bounded on $\mathcal{B}_0$ then it is mean ergodic. Moreover, it is uniformly mean ergodic on $\mathcal{B}_0$ if and only if $\frac{1}{1-\psi}\in  H^{\infty}(\mathbb{U})$.
\end{proposition}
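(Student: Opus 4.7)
Both halves of the proof begin with the same observation. Since $\psi$ is non-constant and $\|\psi\|_\infty=1$, the maximum modulus principle forces $|\psi(z)|<1$ for every $z\in\mathbb{U}$; in particular $\psi\ne 1$ throughout $\mathbb{U}$, so
\begin{equation*}
(M_\psi)_{[n]}f(z)=\frac{f(z)\,\psi(z)\,(1-\psi(z)^n)}{n\,(1-\psi(z))}\longrightarrow 0
\end{equation*}
pointwise on $\mathbb{U}$ for every $f\in\mathcal{B}_0$, because $|\psi(z)|<1$ gives $\psi(z)^n\to 0$ and the factor $1/n$ annihilates the bounded prefactor.

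For the mean ergodicity statement I would invoke Yosida's mean ergodic theorem: a power-bounded operator $T$ on a Banach space is mean ergodic whenever, for every $x$, the sequence $\{T_{[n]}x\}$ has a weakly convergent subsequence. Power boundedness of $M_\psi$ makes $\{(M_\psi)_{[n]}f\}$ norm-bounded in $\mathcal{B}_0$ and hence in $\mathcal{B}=(A^1)^*$, so by Banach-Alaoglu it admits weak-$*$ cluster points in $\mathcal{B}$. Since weak-$*$ convergence in $\mathcal{B}$ refines pointwise convergence on $\mathbb{U}$ (point evaluation being weak-$*$ continuous via the Bergman kernel), any cluster point must coincide with the pointwise limit $0$; therefore the full sequence converges to $0$ weak-$*$ in $\mathcal{B}$. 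Transferring along the identification $\mathcal{B}_0^*\cong A^1$ with the compatible pairing, this is weak convergence to $0$ in $\mathcal{B}_0$, and Yosida's theorem upgrades it to norm convergence.

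For the equivalence I would use the Dunford–Lin theorem (Theorem 2.4) together with the inclusion $\overline{\psi(\mathbb{U})}\subseteq\sigma(M_\psi)$ recalled before the statement. If $1\in\overline{\psi(\mathbb{U})}$, then since $\psi(\mathbb{U})$ is open (open mapping) and contained in $\mathbb{U}$, necessarily $1\in\partial\psi(\mathbb{U})$; hence $1$ is approached by points of $\psi(\mathbb{U})\setminus\{1\}\subseteq\sigma(M_\psi)\setminus\{1\}$ and is an accumulation point of $\sigma(M_\psi)$, so Dunford–Lin forbids uniform mean ergodicity. Conversely, if $1\notin\overline{\psi(\mathbb{U})}$, set $\delta:=\inf_{\mathbb{U}}|1-\psi|>0$ and $\varphi:=1/(\psi-1)$. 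Then $\|\varphi\|_\infty\le 1/\delta$ and, using $\psi=M_\psi 1\in\mathcal{B}_0$,
\begin{equation*}
(1-|z|^2)|\varphi'(z)|\le\frac{(1-|z|^2)|\psi'(z)|}{\delta^2}\longrightarrow 0,\qquad \sigma_\varphi\le\frac{\sigma_\psi}{\delta^2}<\infty,
\end{equation*}
so by Arazy's characterisation together with the Brown–Shields transfer, $\varphi$ is a multiplier of $\mathcal{B}_0$. The identity $M_\varphi M_{\psi-1}=I=M_{\psi-1}M_\varphi$ exhibits $(M_\psi-I)^{-1}=M_\varphi\in L(\mathcal{B}_0)$, so $1\notin\sigma(M_\psi)$; combined with $\|M_\psi^n\|/n\to 0$ (from power boundedness) and Dunford–Lin, $M_\psi$ is uniformly mean ergodic.

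The main obstacle, and the step requiring the most care, is the transition from pointwise to weak convergence in $\mathcal{B}_0$ in the mean ergodic argument: this relies on the duality machinery $\mathcal{B}=(A^1)^*$, $\mathcal{B}_0^*\cong A^1$ with compatible pairing, and the weak-$*$ continuity of point evaluation on $\mathcal{B}$, all classical but needing careful invocation. A more elementary strategy of proving norm convergence of $(M_\psi)_{[n]}p$ for polynomials $p$ and extending by density together with power boundedness seems harder, because when $1\in\overline{\psi(\mathbb{U})}$, controlling $\sup_{z\in\mathbb{U}}(1-|z|^2)\bigl|\bigl(\tfrac{1}{n}\sum_{m=1}^{n}\psi^{m}\bigr)'(z)\bigr|$ requires a uniform lower bound on $|1-\psi|$ — precisely the obstruction that separates the mean ergodic regime from the uniformly mean ergodic one.
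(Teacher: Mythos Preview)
Your argument is correct and follows the paper's strategy: Yosida's theorem for mean ergodicity (via the passage pointwise $\Rightarrow$ weak convergence in $\mathcal{B}_0$ through the duality $\mathcal{B}_0^*\cong A^1$, $(A^1)^*\cong\mathcal{B}$), and Dunford--Lin together with $\overline{\psi(\mathbb{U})}\subseteq\sigma(M_\psi)$ for the uniform statement. Two minor differences are worth noting. First, the paper applies the pointwise-to-weak step to the \emph{iterates} $M_\psi^n f=\psi^n f$ rather than to the Ces\`aro means; since $|\psi(z)|<1$ gives $\psi^n(z)\to 0$ directly, this is slightly cleaner and avoids your Banach--Alaoglu/subnet manoeuvre (though your version is fine once one uses that $A^1$ is separable so the weak-$*$ topology on bounded sets of $\mathcal{B}$ is metrizable). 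Second, for the implication ``$1\notin\overline{\psi(\mathbb{U})}\Rightarrow$ uniformly mean ergodic'' the paper simply appeals to Dunford--Lin and the spectral inclusion, leaving the verification that $1\notin\sigma(M_\psi)$ implicit; your explicit check that $\varphi=1/(\psi-1)$ is a multiplier of $\mathcal{B}_0$ via Arazy's criterion and the Brown--Shields transfer supplies exactly the missing step and makes the argument self-contained.
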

\begin{proof}
Fix $f\in\mathcal{B}_0$. By Theorem 2.4, $M_{\psi}$ is power bounded. Let $M=\sup_{n\in\mathbb{N}}||M_{\psi^{n}}||$. Then $||(M_{\psi})_{[n]}||\leq M$ and also $\lim_{n\rightarrow \infty}\frac{1}{n}M_{\psi^{n}}f=0$. Maximum Modules Principle implies that for all $z\in\mathbb{U}$, $|\psi(z)|<1$, so $\{(M_{\psi})_{[n]}\}_n$ is a bounded sequence that converges pointwise to zero. We show it converges weakly to zero. Let $L\in\mathcal{B}_0^{*}$, fore some $g\in A^{1}(\mathbb{U})$, $L((M_{\psi})_{[n]}f) =\int_{\mathbb{U}}(M_{\psi})_{[n]}f(z)\overline{g(z)}dA(z)$. By Lebesgue Convergence Theorem we can deduce that $L((M_{\psi})_{[n]}f)\rightarrow 0$. The proof of proposition is completed by using Theorem 1.1, page 72 of \cite{kre}.\\
	Suppose $M_{\psi}$ is uniformly mean ergodic. Via the above proof we must have $||(M_{\psi})_{[n]}||\rightarrow 0$. One can easily see that $ker(I-M_{\psi})=0$ and since $M_{\psi}$ is power bounded, $\frac{1}{n}||M_{\psi^{n}}||\rightarrow 0$. So by proposition 2.16 of \cite{Alb1}, $M_{\psi}$ is uniformly mean ergodic if and only if $I-M_{\psi}=M_{1-\psi}$ is an isomorphism of $\mathcal{B}_0$, which is equivalent to  $\frac{1}{1-\psi}\in H^{\infty}(\mathbb{U})$.
\end{proof}
\begin{proposition}
	Suppose $\psi\in H(\mathbb{U})$ is non constant and $||\psi||_\infty=1$ and $M_\psi$ is a bounded operator on  $\mathcal{B}$, then $M_\psi$ is mean ergodic if and only if it is uniformly mean ergodic if and only if  $\frac{1}{1-\psi}\in H^{\infty}(\mathbb{U})$. 
\end{proposition}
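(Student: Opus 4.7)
The plan is a three-way equivalence loop. UME implies ME is trivial, so I must establish UME $\Leftrightarrow (1 \notin \overline{\psi(\mathbb{U})})$ and close with ME $\Rightarrow$ UME.

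I will verify UME $\Leftrightarrow (1 \notin \overline{\psi(\mathbb{U})})$ by the Dunford-Lin Theorem (Theorem 2.4). Power boundedness of $M_\psi$ forces $\|M_\psi^n\|/n \to 0$, so the Dunford-Lin criterion reduces to a statement about $\sigma(M_\psi)$ near the point $1$. If $1 \notin \overline{\psi(\mathbb{U})}$, I define $\phi := 1/(\psi - 1)$. Then $\phi \in H^\infty(\mathbb{U})$ (since $|\psi - 1|$ is bounded below), and the estimate $|\phi'| = |\psi'|/|\psi-1|^2 \leq C |\psi'|$ combined with $\sigma_\psi < \infty$ (from Arazy's criterion for boundedness of $M_\psi$ on $\mathcal{B}$) gives $\sigma_\phi < \infty$; hence $M_\phi$ is bounded on $\mathcal{B}$ and is a two-sided inverse of $M_\psi - I$, so $1 \notin \sigma(M_\psi)$ and UME follows. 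Conversely, if $1 \in \overline{\psi(\mathbb{U})}$, the open mapping theorem applied to the nonconstant $\psi$ renders $\psi(\mathbb{U})$ open in $\mathbb{C}$; since $\psi(\mathbb{U}) \subseteq \sigma(M_\psi)$, the point $1$ is an accumulation point of $\sigma(M_\psi)$, and the final clause of Theorem 2.4 rules out UME.

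The main obstacle is the implication ME $\Rightarrow$ UME, because $\mathcal{B}$ is neither separable nor reflexive and the Yosida-type weak-cluster-point argument used for $\mathcal{B}_0$ in Proposition 2.5 is unavailable here. To bridge this gap, I plan to exploit the classical Banach-space identification of $\mathcal{B}_0$ with $c_0$ (Anderson-Clunie-Pommerenke), under which $\mathcal{B} \cong \mathcal{B}_0^{**} \cong \ell^\infty$ and $\mathcal{B}$ becomes a Grothendieck space with the Dunford-Pettis property. By a theorem of Lotz (see \cite{Alb1, Alb2}), every power bounded mean ergodic operator on such a space is automatically uniformly mean ergodic; applied to $M_\psi$ this yields ME $\Rightarrow$ UME, closing the loop. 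The delicate structural ingredient to verify carefully is that $M_\psi$ acts as the biadjoint of $M_\psi|_{\mathcal{B}_0}$ under the $\mathcal{B}_0^{**}$-identification (so that power boundedness is preserved and Lotz's framework genuinely applies), but this follows from the density of polynomials in $\mathcal{B}_0$ together with weak-$*$ continuity of the biadjoint.
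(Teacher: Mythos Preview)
Your approach matches the paper's: Dunford--Lin for UME $\Leftrightarrow 1\notin\overline{\psi(\mathbb{U})}$, and Lotz's theorem (via the Grothendieck/Dunford--Pettis property of $\mathcal{B}$) for ME $\Rightarrow$ UME; in fact you supply more detail than the paper, which simply asserts the Dunford--Lin step without explicitly verifying that $1\notin\overline{\psi(\mathbb{U})}$ forces $1\notin\sigma(M_\psi)$ on $\mathcal{B}$. One superfluous complication: the biadjoint verification in your final paragraph is unnecessary, since Lotz's theorem applies directly to \emph{any} power bounded operator on a GDP space, and power boundedness of $M_\psi$ on $\mathcal{B}$ is already part of the hypothesis.
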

\begin{proof}
As we said before, $||(M_{\psi})_{[n]}|_{\mathcal{B}}||=||(M_{\psi})_{[n]}|_{\mathcal{B}_{0}}||$, so by above proposition $M_\psi$  is uniformly mean ergodic if and only if  $\frac{1}{1-\psi}\in H^{\infty}(\mathbb{U})$.  On the other hand Bloch space is a Grothendieck Banach space which satisfies the Dunford Pettis property (GDP space) which Lotz in \cite{lotz} proved that mean ergodicity and uniform mean ergodicity are equivalent in these spaces. See \cite{jorda} page 16.
\end{proof}
The forthcoming example is a direct consequence of previous propositions. 
\begin{example}
	
	$M_{z}$ is power bounded operator  on both $\mathcal{B}$ and $\mathcal{B}_0$. But it is  not uniformly mean ergodic nor is it mean ergodic on $\mathcal{B}$. It is mean ergodic on $\mathcal{B}_0$, but it is not uniformly mean ergodic. These statement are    also true for $M_{\psi}$, where $\psi$ is an automorphism of the unit disk.  
\end{example}
The following theorems are direct consequences of this sections:
\begin{theorem}
Let $M_{\psi}$ be a bounded multiplication operator on $\mathcal{B}$, then the following are equivalent:
\begin{enumerate}
	 \item  $||\psi||_{\infty} \leq 1$ and either, $\psi\equiv\xi,$ where $\xi\in\partial\mathbb{U}$ or $\frac{1}{1-\psi}\in H^{\infty}(\mathbb{U})$. 
	\item  $M_{\psi}$ is mean ergodic.
	\item  $M_{\psi}$ is uniformly mean ergodic.
\end{enumerate} 
\end{theorem}
\begin{theorem}
	Let $M_{\psi}$ be a bounded multiplication operator on $\mathcal{B}_0$, then the following are equivalent:
	\begin{enumerate}
		\item $||\psi||_{\infty} \leq 1$
		\item $M_{\psi}$ is power bounded.
		\item $M_{\psi}$ is mean ergodic. 
	\end{enumerate}
\end{theorem}
\begin{theorem}
	Let $M_{\psi}$ be a bounded multiplication operator on $\mathcal{B}_0$, then the following are equivalent:
	\begin{enumerate}
		\item  $||\psi||_{\infty} \leq 1$ and either, $\psi\equiv\xi,$ where $\xi\in\partial\mathbb{U}$ or $\frac{1}{1-\psi}\in H^{\infty}(\mathbb{U})$. 
		\item  $M_{\psi}$ is uniformly mean ergodic.
	\end{enumerate} 
\end{theorem}   
\section{Besov Space $\mathcal{B}_p (1<p<\infty)$}
Before starting this section, it is necessary to remind that a Banach space $X$ is said to be $mean \ ergodic$ if each power bounded operator is mean ergodic. Lorch  by extending  the result of Rizes, showing that $L_p$ spaces are mean ergodic, proved that the reflexive spaces are also mean ergodic, see \cite{Alb1}. According to the introduction, for $1<p<\infty$ Besov Spaces $\mathcal{B}_p$ are reflexive spaces and therefore power boundedness  of an operator implies mean ergodicity. 
In this section we only consider the case $1<p<\infty$.
\begin{theorem}
	Suppose $\psi\in H(\mathbb{U})$ and $M_\psi$ is a bounded operator on Besov space $\mathcal{B}_p$. If $M_\psi$ is power bounded, mean ergodic or uniformly mean ergodic operator on $\mathcal{B}_p$, then $||\psi||_\infty\le1$. 
	\begin{proof}
		
		First suppose $||\psi||_{\infty}>1$. Then there is $\alpha>0$ such that $||\psi||_{\infty}>\alpha>1.$ Since by (1.1), $||\psi||_\infty \le ||M_{\psi}||$, for all $n\in \mathbb{N}$. we have $\alpha^n<||\psi^n||_{\infty}\le ||M_{\psi^n}||$  and therefore $M_{\psi}$ can not be power bounded on $\mathcal{B}_p$.
		Now suppose $M_\psi$ is uniformly mean ergodic (or mean ergodic) on $\mathcal{B}_p$, then for all $f\in\mathcal{B}_p$ we have $\lim_{n\rightarrow \infty}\frac{1}{n}f.\psi^n=0$ when $n\rightarrow \infty$. Let $f\equiv 1$, then $||\frac{\psi^n}{n}||_{p} \rightarrow 0$. By lemma(1.2), $|\frac{\psi^n(z)}{n}|\le C||\frac{\psi^n}{n}||_{p}(\log\frac{2}{1-|z|^2})^{1-\frac{1}{p}}$ for all $z\in \mathbb{U}$ and some $C\ge 0$. So $|\frac{\psi^n(z)}{n}|\rightarrow 0$ when $n\rightarrow \infty$ for all $z\in \mathbb{U}$,so $|\psi(z)|\le1$ for all $z\in \mathbb{U}$ and finally $||\psi||_\infty\le1$.

	\end{proof}
\end{theorem}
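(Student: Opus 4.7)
The plan is to mirror the strategy of Proposition 2.1 in the Bloch setting, splitting the argument into two parts: handling power boundedness directly via the norm inequality $\|\psi\|_\infty \le \|M_\psi\|$ from (1.1), and deducing $\|\psi\|_\infty \le 1$ from mean (or uniform mean) ergodicity by extracting pointwise information from the Cesàro means through Lemma 1.2. The only substantive difference from the Bloch case is that the pointwise estimate now carries the weight $(\log\frac{2}{1-|z|^2})^{1-1/p}$ rather than $\log\frac{2}{1-|z|^2}$, but this plays no essential role since I only need pointwise decay at each fixed $z$.

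First I would assume $M_\psi$ is power bounded. If $\|\psi\|_\infty > 1$, choose $\alpha$ with $1 < \alpha < \|\psi\|_\infty$. Since $\psi$ is a multiplier of $\mathcal{B}_p$, so is $\psi^n$, and (1.1) applied to $\psi^n$ gives
$$\|M_{\psi^n}\| \ge \|\psi^n\|_\infty = \|\psi\|_\infty^n > \alpha^n \longrightarrow \infty,$$
contradicting power boundedness. Hence $\|\psi\|_\infty \le 1$.

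Next, suppose $M_\psi$ is mean ergodic or uniformly mean ergodic, and write $T = M_\psi$, $T_{[n]} = (M_\psi)_{[n]}$. A standard consequence of the convergence of $\{T_{[n]}\}$ (in the strong or norm topology) is that $\tfrac{1}{n}T^n \to 0$ in the same topology, since
$$\tfrac{1}{n}T^n = T_{[n]} - \tfrac{n-1}{n}T_{[n-1]}.$$
Applying this operator to the test function $f \equiv 1$, which lies in $\mathcal{B}_p$ (its seminorm $\gamma_f$ vanishes), I obtain $\|\psi^n/n\|_p \to 0$. Lemma 1.2 then yields, for each fixed $z \in \mathbb{U}$,
$$\left|\frac{\psi^n(z)}{n}\right| \le C \left\|\frac{\psi^n}{n}\right\|_p \left(\log\frac{2}{1-|z|^2}\right)^{1-1/p} \longrightarrow 0.$$
If there existed $z_0 \in \mathbb{U}$ with $|\psi(z_0)| > 1$, then $|\psi(z_0)|^n/n \to \infty$, contradicting the above. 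Therefore $|\psi(z)| \le 1$ for every $z \in \mathbb{U}$, i.e.\ $\|\psi\|_\infty \le 1$.

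The only point requiring care, and the one I would flag as the main (minor) obstacle, is the passage from convergence of the Cesàro means to $\tfrac{1}{n}T^n \to 0$; this is a routine identity in the strong topology but should be stated cleanly so that the same deduction covers all three hypotheses (power bounded, mean ergodic, uniformly mean ergodic) uniformly. Everything else is bookkeeping: verifying $1 \in \mathcal{B}_p$, invoking Lemma 1.2 at a single point, and comparing $|\psi(z_0)|^n$ with $n$.
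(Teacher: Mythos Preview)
Your proposal is correct and follows essentially the same approach as the paper's own proof: both split into the power-bounded case via the inequality $\|\psi\|_\infty \le \|M_\psi\|$ from (1.1), and the (uniformly) mean ergodic case by applying $\tfrac{1}{n}M_\psi^n \to 0$ to the constant function $1$ and using Lemma~1.2 to pass to pointwise decay. Your version is in fact slightly more explicit, supplying the identity $\tfrac{1}{n}T^n = T_{[n]} - \tfrac{n-1}{n}T_{[n-1]}$ that the paper invokes without comment.
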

From now on, we assume that analytic function $\psi$ holds in the following condition:
\begin{align}
\int_{\mathbb{U}}(1-|z|^2)^{p-2}|\psi'(z)|^p(\log\frac{2}{1-|z|^2})^{p-1}dA(z)<\infty.
\end{align} 
\begin{theorem}
Suppose that $\psi \in H(\mathbb{U})$ and condition (3.1) is met. If $M_{\psi}$  is a bounded operator on the Besov space $\mathcal{B}_p$, then the following statements are equivalent.
\begin{itemize}
\item[(i)] $||\psi||_{\infty} \le 1$. 
\item[(ii)]  $M_{\psi}$  is power bounded.
\item[(iii)] $M_{\psi}$ is mean ergodic.
\end{itemize}
\begin{proof}

According to the initial interpretations of the section, it is sufficient to show that (i) and (ii) are equivalent.
 Let $||\psi||_{\infty}\le1$. If there exist $z\in \mathbb{U}$ such that $|\psi(z)|=1$, then $\psi(z)=\lambda$, $|\lambda|=1$ and $\psi'\equiv 0$. So for $f\in \mathcal{B}_p$ and $||f||_{p}=1$ we have
\begin{align*}
||M_{\psi^n}f||_{p}&=|\psi^n(0)f(0)|+\gamma_{\psi^nf}\\
&\le|f(0)|+\gamma_f=||f||_{p},
\end{align*}
and $M_{\psi}$ is power bounded on $\mathcal{B}_p$, in fact $||M_{\psi^n}||\le1$, for all $n\in\mathbb{N}$. Now suppose $|\psi(z)|<1$ for all $z\in \mathbb{U}$ and let $f\in\mathcal{B}_p.$ In this case, the followings can be deduced;
\begin{itemize}
\item[1.] $|\psi^n(0)f(0)| \rightarrow 0$ when $n\rightarrow \infty,$ since $|\psi(0)|<1.$
\item[2.] $\int_{\mathbb{U}}|f'(z)|^p|\psi^n(z)|^p(1-|z|^2)^{p-2}dA(z) \rightarrow 0$, as $n\rightarrow \infty,$  since $|f'(z)\psi^n(z)|^p(1-|z|^2)^{p-2}\le |f'(z)^p(1-|z|^2)^{p-2},$ and $f\in\mathcal{B}_p$ gives us that  $\int_{\mathbb{U}}|f'(z)|^p(1-|z|^2)^{p-2}dA(z)<\infty,$ then $|f'(z)\psi^n(z)|^p(1-|z|^2)^{p-2}$ is integrable for all $n\in\mathbb{N}.$ By using Lebesgue Convergence theorem the result is obtained. 
\item[3.] $\int_{\mathbb{U}}|n\psi'(z)\psi^{n-1}(z)f(z)|^p(1-|z|^2)^{p-2}dA(z)\rightarrow 0,$ since by lemma (1.2): 
$$|n\psi'(z)\psi^{n-1}(z)f(z)|^p(1-|z|^2)^{p-2}\le n^p C^p||f||_{p}^p (1-|z|^2)^{p-2}|\psi'(z)|^p (\log\frac{2}{1-|z|^2})^{p-1},$$ by hypothesis the right side of the last inequality is integrable for all $n\in\mathbb{N}$ and so is $|n\psi'(z)\psi^{n-1}(z)f(z)|^p(1-|z|^2)^{p-2}$. Lebesgue Converges theorem gives the desired result.
\end{itemize}
Consequently for all $f\in\mathcal{B}_p$, $||M_{\psi^n}f||_{p} \rightarrow 0 $ when $n\rightarrow \infty$. So $\{M_{\psi^n}f\}$ is bounded sequence for all $f\in\mathcal{B}_p$ and by Principle uniform boundedness $M_{\psi}$ is power bounded on $\mathcal{B}_p$. 
\end{proof}
\end{theorem}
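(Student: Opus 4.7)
The plan is to prove the three-way equivalence by leveraging Theorem 3.1 together with the reflexivity of $\mathcal{B}_p$. Specifically, Theorem 3.1 already supplies both (II) $\Rightarrow$ (I) and (III) $\Rightarrow$ (I), so the remaining work is (I) $\Rightarrow$ (II) and (II) $\Rightarrow$ (III). The latter is immediate: since $\mathcal{B}_p$ is reflexive for $1<p<\infty$, Lorch's theorem (recalled in the opening paragraph of this section) forces every power bounded operator to be mean ergodic. Hence the substantive content is (I) $\Rightarrow$ (II).

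For (I) $\Rightarrow$ (II) I would split into two cases governed by the maximum modulus principle. If $|\psi(z_0)|=1$ for some $z_0\in\mathbb{U}$, then $\psi$ must be a unimodular constant $\lambda$ and $\psi'\equiv 0$, so a direct computation of $\|\psi^n f\|_p=|\psi^n(0)f(0)|+\gamma_{\psi^n f}$ collapses to $|f(0)|+\gamma_f=\|f\|_p$; thus $\|M_{\psi^n}\|\le 1$ uniformly in $n$ and power boundedness is trivial.

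In the remaining case $|\psi(z)|<1$ for every $z\in\mathbb{U}$, my goal would be to establish that $M_{\psi^n}f\to 0$ in $\mathcal{B}_p$ for each fixed $f\in\mathcal{B}_p$, so that the Uniform Boundedness Principle delivers $\sup_n\|M_{\psi^n}\|<\infty$. Expanding $(\psi^n f)'=n\psi^{n-1}\psi' f+\psi^n f'$ and applying the elementary inequality $|a+b|^p\le 2^{p-1}(|a|^p+|b|^p)$, one is left to control three quantities: the point evaluation $|\psi^n(0)f(0)|$, the integral $\int_{\mathbb{U}}|\psi^n f'|^p(1-|z|^2)^{p-2}\,dA$, and the integral $n^p\int_{\mathbb{U}}|\psi^{n-1}\psi' f|^p(1-|z|^2)^{p-2}\,dA$. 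The first vanishes because $|\psi(0)|<1$, and the second tends to $0$ by dominated convergence since $|\psi|^{np}\to 0$ pointwise and the integrand is majorised by the integrable function $|f'|^p(1-|z|^2)^{p-2}$.

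The main obstacle is the third integral. Combining Lemma 1.2 with hypothesis (3.1) shows that its integrand is pointwise bounded by $n^p C^p \|f\|_p^p|\psi'(z)|^p(\log(2/(1-|z|^2)))^{p-1}(1-|z|^2)^{p-2}$, and the factor without $n^p$ is integrable; the difficulty is that dominated convergence demands an $n$-\emph{independent} majorant, while the prefactor $n^p$ manifestly depends on $n$. A plausible remedy is to exploit that $n|\psi(z)|^{n-1}\to 0$ pointwise whenever $|\psi(z)|<1$ and to split $\mathbb{U}$ into a region where $|\psi|$ is bounded away from $1$ (so that $n^p|\psi|^{p(n-1)}$ is uniformly small in $n$) and its complement, in which (3.1) must absorb the remaining contribution. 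Securing this estimate uniformly in $n$ is the technical heart of the argument, and it is where I expect most of the work to lie.
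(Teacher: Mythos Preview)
Your outline matches the paper's proof essentially step for step: the same reductions via Theorem~3.1 and reflexivity (Lorch), the same maximum-modulus case split, the same three-term decomposition of $\|\psi^n f\|_p$, and the same conclusion via the Uniform Boundedness Principle. The only divergence is at the third integral, where you flag the $n$-dependence of the majorant as the technical crux and propose a level-set splitting; the paper does not do this---it simply records the bound coming from Lemma~1.2 and condition~(3.1), notes integrability for each $n$, and invokes the Lebesgue convergence theorem without supplying an $n$-independent dominant, so the extra work you anticipate is not actually carried out in the paper's own argument.
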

Recall that by $\sigma(T)$ (spectrum of $T$) we mean the set of all $\lambda\in\mathbb{C}$ such that $T-\lambda I$ is not invertible.
\begin{lemma}
Suppose $\psi \in H(\mathbb{U})$ which satisfies condition (3.1) 
and $M_{\psi}$  is a bounded operator on the Besov space $\mathcal{B}_p$, then $\overline{\psi(\mathbb{U})}= \sigma(M_\psi),$ ($\overline{\psi(\mathbb{U})}$ means the norm closure of $\psi(\mathbb{U})$).
\begin{proof}
First since $M_\psi-\lambda I=M_{\psi-\lambda}$, then $\lambda\in \sigma(M_\psi)$ if and only if $M_{\psi-\lambda}$ is not invertible. If $M_{\psi-\lambda}$ is invertible, then $(M_{\psi-\lambda})^{-1}=M_{(\psi-\lambda)^{-1}}=M_{\frac{1}{\psi-\lambda}}$. So if $\lambda \in \psi (\mathbb{U})$ then there exists $z_0\in \mathbb{U}$ such that $\psi(z_0)=\lambda$ therefore $\frac{1}{\psi-\lambda}\notin H^{\infty}(\mathbb{U})$ and $M_{\psi-\lambda}$ is not invertible that means $\lambda \in \sigma(M_\psi)$ and $\psi(\mathbb{U}) \subseteq \sigma(M_\psi)$. But $\sigma(M_\psi)$ is closed so $\overline{\psi(\mathbb{U})}\subseteq \sigma(M_\psi)$. Now assume that (3.1) holds and $\lambda\notin \overline{\psi(\mathbb{U})}$, hence $\frac{1}{\psi(z)-\lambda}\in H^\infty(\mathbb{U})$. By (3.1) 
$$\int_{\mathbb{U}}\frac{|\psi'(z)|^p}{|\psi(z)-\lambda|^{2p}}\log\frac{2}{1-|z|^2})^{p-1}(1-|z|^2)^{p-2}dA(z)<\infty.$$
Thus by proposition (1.4), $M_{\frac{1}{\psi-\lambda}}$ is bounded on $\mathcal{B}_p$ and  $M_{\psi-\lambda}$ is invertible which means $\lambda \notin \sigma(M_\psi)$.

\end{proof} 
\end{lemma}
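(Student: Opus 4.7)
The plan is to prove the equality $\overline{\psi(\mathbb{U})}=\sigma(M_\psi)$ by establishing the two inclusions separately. The forward inclusion $\overline{\psi(\mathbb{U})}\subseteq \sigma(M_\psi)$ does not use the integral hypothesis (3.1) at all and is essentially a consequence of Proposition 1.3; the reverse inclusion is where the integrability assumption enters, because it is exactly what lets us apply Proposition 1.4(II) to produce an explicit inverse for $M_{\psi-\lambda}$.

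For the first inclusion, I would observe that $M_\psi-\lambda I=M_{\psi-\lambda}$, so $\lambda\in\sigma(M_\psi)$ is equivalent to $M_{\psi-\lambda}$ not being invertible on $\mathcal{B}_p$. If $\lambda\in\psi(\mathbb{U})$, choose $z_0\in\mathbb{U}$ with $\psi(z_0)=\lambda$; then $\frac{1}{\psi-\lambda}$ has a singularity at $z_0$, hence cannot belong to $H^\infty(\mathbb{U})$. Since any bounded inverse of $M_{\psi-\lambda}$ on the functional Banach space $\mathcal{B}_p$ would have to act as multiplication by $\frac{1}{\psi-\lambda}$, Proposition 1.3 would then force $\frac{1}{\psi-\lambda}\in H^\infty(\mathbb{U})$, a contradiction. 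Thus $\psi(\mathbb{U})\subseteq\sigma(M_\psi)$, and closedness of the spectrum gives $\overline{\psi(\mathbb{U})}\subseteq\sigma(M_\psi)$.

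For the reverse inclusion, I would fix $\lambda\notin\overline{\psi(\mathbb{U})}$ and show $M_{\psi-\lambda}$ is invertible. Since $\overline{\psi(\mathbb{U})}$ is closed and misses $\lambda$, there exists $\delta>0$ with $|\psi(z)-\lambda|\ge\delta$ for every $z\in\mathbb{U}$, so $\frac{1}{\psi-\lambda}\in H^\infty(\mathbb{U})$ and in particular $\bigl|\frac{1}{\psi(z)-\lambda}\bigr|\le 1/\delta$. The aim is now to invoke Proposition 1.4(II) for the function $\varphi=\frac{1}{\psi-\lambda}$: differentiating gives $\varphi'(z)=-\psi'(z)/(\psi(z)-\lambda)^{2}$, so
\[
\int_{\mathbb{U}}(1-|z|^2)^{p-2}|\varphi'(z)|^p\Bigl(\log\tfrac{2}{1-|z|^2}\Bigr)^{p-1}dA(z)\le \frac{1}{\delta^{2p}}\int_{\mathbb{U}}(1-|z|^2)^{p-2}|\psi'(z)|^p\Bigl(\log\tfrac{2}{1-|z|^2}\Bigr)^{p-1}dA(z),
\]
which is finite by hypothesis (3.1). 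Proposition 1.4(II) then gives $\varphi\in M(\mathcal{B}_p)$, so $M_\varphi$ is a bounded operator on $\mathcal{B}_p$ and clearly $M_\varphi M_{\psi-\lambda}=M_{\psi-\lambda}M_\varphi=I$, showing $\lambda\notin\sigma(M_\psi)$.

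The main obstacle I expect is the reverse inclusion: one must produce a bounded inverse on $\mathcal{B}_p$ rather than only on $H^\infty(\mathbb{U})$, and it is not obvious a priori that the reciprocal of a multiplier is again a multiplier. The integrability hypothesis (3.1) is precisely the lever that makes this work, via the sufficient condition in Proposition 1.4(II); without it the argument collapses, since as the authors note after Proposition 1.4, the converse implication in that proposition fails in general.
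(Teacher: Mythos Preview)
Your proposal is correct and follows essentially the same route as the paper: both inclusions are established exactly as the authors do, using $M_\psi-\lambda I=M_{\psi-\lambda}$ together with Proposition~1.3 for $\overline{\psi(\mathbb{U})}\subseteq\sigma(M_\psi)$, and then applying Proposition~1.4(II) to $\varphi=\frac{1}{\psi-\lambda}$ (with $\varphi'=-\psi'/(\psi-\lambda)^2$ and the uniform lower bound $|\psi-\lambda|\ge\delta$) to obtain the reverse inclusion. Your write-up is in fact slightly more explicit than the paper's about the constant $1/\delta^{2p}$ controlling the integral.
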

Dunford in \cite{Du} stated the connection between the spectral properties of an operator and its uniform mean ergodicity. 
The following Theorem  represents Lin and Dunford Theorems together.
\begin{theorem}
	If an operator $T$ on a Banach space $X$ is uniformly mean ergodic, if and only if both $(||T^n||/n)_n$ converges to $0$ and either $1\in \mathbb{C}\setminus \sigma(T)$ or $1$ is a pole of order $1$ of the resolvent $R_T:\mathbb{C}\setminus\sigma(T)\rightarrow L(X), R_T(\lambda)=(T-\lambda I)^{-1}$. Consequently if $1$ is an accumulation of $\sigma(T)$, then $T$ is not uniformly mean ergodic.
\end{theorem}
\begin{proof}
	See \cite{Du} and \cite{lin}.
\end{proof}
\begin{theorem}
Suppose $\psi \in H(\mathbb{U})$ which holds $(3.1)$ and $M_{\psi}$  is a bounded operator on the Besov space $\mathcal{B}_p$, then $M_{\psi}$ is uniformly mean ergodic on $\mathcal{B}_p$ if and only if $||\psi||_{\infty}\le1$ and either $\psi\equiv \xi$ for some $\xi\in\partial\mathbb{U}$ or $\frac{1}{1-\psi}\in H^{\infty}(\mathbb{U}) $.
\begin{proof} 
Let $||\psi||_\infty\le1.$ Consider that $(M_\psi)_{[n]}f(z)=\frac{f(z)}{n}\sum_{m=1}^{n}(\psi(z))^n$.
 So if $\psi \equiv 1$, we can easily see $||(M_\psi)_{[n]}-I||\rightarrow 0$ when $n\rightarrow \infty$, where $I$ is the identity operator on $\mathcal{B}_p$. In the case $\psi\equiv \xi$, where $\xi\ne1$, we have $(M_{\psi})_{[n]}=\frac{\xi+\xi^2+\dots+\xi^n}{n}f=\frac{f}{n}\frac{\xi(1-\xi^{n+1})}{1-\xi}$ and clearly $||(M_{\psi})_{[n]} ||\rightarrow 0.$ If $\frac{1}{1-\psi}\in H^{\infty}(\mathbb{U}),$ an apply of propositon (1.4) shows that the function $\frac{1}{1-\psi}\in M(\mathcal{B}_p)$ and $M_\frac{1}{1-\psi}$ is bounded on $\mathcal{B}_p$, it means that $1\notin \sigma(M_\psi)$ and since $M_\psi$ is power bounded, Dunford-Lin Theorem guaranties the uniform mean ergodicity of $M_\psi$ on $\mathcal{B}_p$.

Conversely; assume that $M_\psi$ is uniformly mean ergodic on $\mathcal{B}_p$. So by Theorems (3.1) and (3.2)  it is power bounded and $||\psi||_\infty\le 1$. Suppose $\psi$ is not uni- modular constant function, so $|\psi(z)|<1$ for all $z\in\mathbb{U}$, this get us that $||(M_{\psi})_{[n]}||\rightarrow 0$ as $n\rightarrow\infty$. Also $ker(I-M_{\psi})={0}$ and power boundedness of $M_{\psi}$ implies that $\lim_{n\rightarrow \infty}\frac{1}{n}|| M_{\psi}^{n}||=0$, Proposition 2.16 of \cite{Alb1} confirms that $I-M_{\psi}$ is an isomorphism on $\mathcal{B}_p$, i.e. $M_{1-\psi}^{-1}=M_{\frac{1}{1-\psi}}$ is bounded on $\mathcal{B}_p$, thus $\frac{1}{1-\psi}\in H^{\infty}(\mathbb{U}) $.
\end{proof} 
\end{theorem}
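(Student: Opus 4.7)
The plan is to combine the Dunford-Lin criterion (Theorem 2.4) with the earlier results of this section: Theorems 3.1 and 3.2 handle power boundedness (and hence the asymptotic rate $||M_\psi^n||/n\to 0$), while Lemma 3.3 translates the spectral condition at $\lambda=1$ into a concrete condition on $\psi$. Splitting the equivalence into sufficiency and necessity, the sufficiency direction reduces to a direct computation (when $\psi$ is a unimodular constant) or to a spectral verification (when $\frac{1}{1-\psi}\in H^\infty(\mathbb{U})$), while the necessity direction forces one of these two alternatives.

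For sufficiency, I would separate the two cases. If $\psi\equiv\xi$ with $\xi\in\partial\mathbb{U}$, compute the Ces$\acute{a}$ro means explicitly: for $\xi=1$ one has $(M_\psi)_{[n]}=I$ for every $n$, and for $\xi\ne 1$ one obtains $(M_\psi)_{[n]}f=\frac{\xi(1-\xi^{n+1})}{n(1-\xi)}f$, so $(M_\psi)_{[n]}$ converges to the zero operator in operator norm. If instead $||\psi||_\infty\le 1$ and $\frac{1}{1-\psi}\in H^\infty(\mathbb{U})$, then Theorem 3.2 already yields that $M_\psi$ is power bounded, so $||M_\psi^n||/n\to 0$. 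The hypothesis $\frac{1}{1-\psi}\in H^\infty(\mathbb{U})$ means $\inf_{z\in\mathbb{U}}|1-\psi(z)|>0$, so $1\notin\overline{\psi(\mathbb{U})}=\sigma(M_\psi)$ by Lemma 3.3, and Dunford-Lin closes the argument.

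For necessity, assume $M_\psi$ is uniformly mean ergodic. Theorem 3.1 forces $||\psi||_\infty\le 1$ and Theorem 3.2 upgrades this to power boundedness. Suppose $\psi$ is not a unimodular constant. If $\psi$ is a constant $c$ with $|c|<1$ then $\frac{1}{1-\psi}\equiv \frac{1}{1-c}$ is trivially in $H^\infty(\mathbb{U})$. If $\psi$ is non-constant, I would argue by contradiction: assuming $1\in\overline{\psi(\mathbb{U})}$, the open mapping theorem makes $\psi(\mathbb{U})$ open in $\mathbb{C}$, so $1$ is an accumulation point of $\overline{\psi(\mathbb{U})}=\sigma(M_\psi)$; but Theorem 2.4 rules out uniform mean ergodicity in that situation, a contradiction. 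Hence $1\notin\overline{\psi(\mathbb{U})}$, which is equivalent to $\frac{1}{1-\psi}\in H^\infty(\mathbb{U})$.

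The main obstacle is the spectral step in the necessary direction: one must rule out the scenario in Dunford-Lin where $1$ is a simple pole of the resolvent rather than a regular point. The device for this is precisely the open mapping theorem together with Lemma 3.3, which promote any contact of $\overline{\psi(\mathbb{U})}$ with $1$ into an accumulation point of the spectrum, incompatible with uniform mean ergodicity. The exceptional case of $\psi$ being a unimodular constant, where the open mapping theorem is inapplicable, is exactly why the theorem isolates the alternative $\psi\equiv\xi$ in its statement.
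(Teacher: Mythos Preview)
Your proof is correct and follows essentially the same route as the paper: Dunford--Lin together with the spectral identification $\sigma(M_\psi)=\overline{\psi(\mathbb{U})}$ from Lemma~3.3, with the unimodular-constant case handled by direct computation of the Ces\`aro means. In the sufficiency direction the paper invokes Proposition~1.4 to show $M_{\frac{1}{1-\psi}}$ is bounded, whereas you quote Lemma~3.3 directly; these are equivalent since the proof of Lemma~3.3 is precisely that application of Proposition~1.4.

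Your necessity argument is in fact more complete than the paper's. The paper simply asserts ``by Proposition~2.4, $1\notin\sigma(M_\psi)$'', but Dunford--Lin also allows $1$ to be a simple pole of the resolvent, and the paper does not explain why that alternative is excluded. You close this gap: for non-constant $\psi$ the open mapping theorem makes $\psi(\mathbb{U})$ open, so any point of $\overline{\psi(\mathbb{U})}=\sigma(M_\psi)$ is an accumulation point of the spectrum and hence cannot be an isolated pole. This is exactly the missing step, and your separate treatment of constants with $|c|<1$ (where $\frac{1}{1-c}\in H^\infty$ trivially) is also needed and absent from the paper.
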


Department of Mathematics, Faculty of Sciences, Shiraz Branch, Islamic Azad University,  Shiraz, Iran.\\
E-mail: ffalahat@gmail.com\\
Department of Mathematics, Faculty of Sciences, Shiraz Branch, Islamic Azad University,  Shiraz, Iran.\\
E-mail: zkamali@shirazu.ac.ir

\end{document}